\newtheorem{lem}[thm]{Lemma}
\theoremstyle{definition}
\newtheorem{defn}[thm]{Definition}
\declaretheorem[name=Example,qed={\lower-0.3ex\hbox{$\triangleleft$}}, sibling=thm]{example}
\title{Group-graded Twisted Calabi--Yau Algebras}
\author{Yasmeen S. Baki}
\date{}
\begin{document}
\begin{abstract}
Historically, the study of graded (twisted or otherwise) Calabi--Yau algebras has meant the study of such algebras under an $\N$-grading. In this paper, we propose a suitable definition for a twisted $G$-graded Calabi--Yau algebra, for $G$ an arbitrary abelian group. Building on the work of Reyes and Rogalski, we show that a $G$-graded algebra is twisted Calabi--Yau if and only if it is $G$-graded twisted Calabi--Yau. In the second half of the paper, we prove that localizations of twisted Calabi--Yau algebras at elements which form both left and right denominator sets remain twisted Calabi--Yau. As such, we obtain a large class of $\Z$-graded twisted Calabi--Yau algebras arising as localizations of Artin--Schelter regular algebras. Throughout the paper, we survey a number of concrete examples of $G$-graded twisted Calabi--Yau algebras, including the Weyl algebras, families of generalized Weyl algebras, and universal enveloping algebras of finite dimensional Lie algebras.
\end{abstract}

\maketitle

\section{Introduction}
\label{sec:intro}

Throughout this paper, fix a field $k$ and assume that all bimodules are $k$-central. Unless otherwise specified, all modules are assumed to be left modules, and when we speak of a graded ring $R = \bigoplus_{\sigma \in G} R_{\sigma}$, we assume that $G$ is an abelian group. The undecorated tensor $\otimes$ is understood to be taken over the base field $k$.

First introduced in late 2006 by Ginzburg \cite{GinzburgCY} as a way to ``transplant conventional Calabi--Yau geometry'' into the non-commutative setting, Calabi--Yau algebras are now a mainstay of noncommutative algebra, and occupy a space in many a noncommutative geometer's consciousness that is difficult to overstate. Often thought of as a good candidate for noncommutative polynomial rings in the $\N$-graded case, the study of $\N$-graded twisted Calabi--Yau algebras encompasses that of Artin--Schelter regular algebras \cite{cytoolkit}, and in global dimension $3$ is closely related to path algebras of quivers arising from superpotentials \cite{BOCKLANDT200814}. Work of Reyes and Rogalski in \cite{cytoolkit} has classified locally finite-dimensional $\N$-graded twisted Calabi--Yau algebras in small dimensions, and recent work of Gaddis et al. in \cite{GADDIS202486} classifies certain quivers supporting $\N$-graded twisted Calabi--Yau algebras, extending work done by Gaddis and Rogalski in \cite{GADDIS2021106645}. Inspired by this interest in the $\N$-graded case, our initial motivation is to ask what other types of gradings are supported by these twisted Calabi--Yau algebras.

Historically, the study of graded (twisted or otherwise) Calabi--Yau algebras has meant the study of such algebras under an $\N$-grading. This makes good sense: if we are to understand this branch of noncommutative algebra as searching for suitable analogs to polynomial rings, then noncommutative rings with some notion of ``polynomial degree'' are the ideal place to start. Yet various sources allude to the possibility of group-graded Calabi--Yau algebras being an interesting direction of study. For instance, \cite{CY-deformations} defines explicitly the (untwisted) $\Z$-graded flavors, while Bell and Rogalski, in their classification of $\Z$-graded simple rings, motivate the discussion as a response to the focus of noncommutative algebraic geometry resting almost entirely in the $\N$-graded setting \cite{zgrad}. The lack of attention to group-graded twisted Calabi--Yau algebras happens in spite of the fact that examples such as the Weyl algebras are well-known to be simultaneously $\Z$-graded and Calabi--Yau. 

Our preliminary goal is thus to develop a proper place for the Weyl algebras in the theory of graded twisted Calabi--Yau algebras. The fact that the Weyl algebras are not non-trivially $\N$-graded is further evidence of our hope that there really is something new to be said in this group-graded setting. Our first order of business is then in developing the machinery to provide a suitable definition of precisely \textit{what} we mean by ``group-graded twisted Calabi--Yau''.

Let us begin by recalling some of the preliminary definitions. Let $A$ be a $k$-algebra, and define the enveloping algebra of $A$ as $A^e \coloneqq A \otimes A^{\op}$, where $A^{\op}$ is the opposite algebra of $A$. As we are assuming that all bimodules are $k$-central, we know that given an $(A,A)$-bimodule $M$, we can view it as a left or right $A^e$-module via the action 
\begin{align*}
    (a \otimes b^{\op}) \cdot m = a \cdot m \cdot b = m \cdot (b \otimes a^{\op})\text,
\end{align*}
where $a,b \in A$, and $m \in M$. In particular, we have a correspondence between left $A^{e}$-modules and $(A,A)$-bimodules. The algebra $A$ is said to be \textit{homologically smooth} if it has a finite-length resolution of finitely generated projective $A^{e}$-modules; that is, $A$ is perfect as a left $A^{e}$-module. If $\nu$ is an algebra automorphism of $A$, then we let $^1A^{\nu}$ denote the bimodule $A$ with normal left action, but right action twisted by $\nu$. That is, $a \cdot b \cdot c \coloneqq ab\nu(c)$ for all $a,b,c \in A$. Let us first recall the definition of a twisted Calabi--Yau algebra, before any grading enters the picture.

\begin{defn}[\cite{cytoolkit}]
    Let $A$ be a $k$-algebra, and suppose that $A$ is homologically smooth. Then we say $A$ is \emph{twisted Calabi--Yau of dimension $d$} if there is an invertible $(A,A)$-bimodule $U$ such that following holds as an isomorphism of $(A,A)$-bimodules:
    \begin{equation*}
        \ext^i_{A^e}(A,A^e) \cong 
        \begin{cases}
            0, \quad &i\neq d; \\
            U, \quad &i = d.
        \end{cases}
    \end{equation*}
\end{defn}
The use of the term \textit{twisted} is meant to emphasize that $U$ may not be isomorphic to $A$. If the above isomorphism should hold with $U \cong A$, then we say that $A$ is (untwisted) Calabi--Yau. If the isomorphism were to hold with $U \cong {}^1A^{\nu}$ for $\nu$ an algebra automorphism of $A$, then we say that $A$ is twisted Calabi--Yau with Nakayama automorphism $\nu$. We say that the graded $k$-algebra $A = \bigoplus_{\sigma \in \N} A_{\sigma}$ is $\N$-graded twisted Calabi--Yau of dimension $d$ if $A$ is perfect when viewed as an object in the category of $\N$-graded left $A^e$-modules, and the above isomorphism holds when viewed as a morphism in the same graded category. The more general $G$-graded situation is defined as follows.

\begin{defn}[Definition \ref{def:g-grade-CY}]
    Let $A$ be a $G$-graded $k$-algebra, and let $U$ be a $G$-graded invertible $(A,A)$-bimodule. We say that $A$ is \emph{$G$-graded twisted Calabi--Yau of dimension $d$} if $A$ is graded homologically smooth and we have the following graded isomorphism of $(A,A)$-bimodules:
\begin{align*}
    \Ext_{A^{e}}^{i}(A,A^e) \cong
    \begin{cases}
    0, \ i \neq d; \\
    U, \ i = d.
    \end{cases}
\end{align*}
\end{defn}

As we would hope for any good definition, we observe that the familiar $\N$-graded flavor is a special case of this new definition. Indeed, any $\N$-graded algebra $A = \bigoplus_{n \in \N} A_n$ can be realized as a $\Z$-graded algebra by setting $A_n = 0$ for $n < 0$. Reyes and Rogalski prove in Theorem 4.2 of \cite{cytoolkit} that an $\N$-graded algebra that happens to be twisted Calabi--Yau is the same as an $\N$-graded twisted Calabi--Yau algebra. We find the same holds true in the group-graded setting, as well.

\begin{thm}[Theorem \ref{thm:gradcyiff}]
    Let $A$ be a $G$-graded algebra. Then $A$ is twisted Calabi--Yau of dimension $d$ if and only if $A$ is $G$-graded twisted Calabi--Yau of dimension $d$.
\end{thm}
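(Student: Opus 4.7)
The plan is to follow the template of Theorem 4.2 of Reyes--Rogalski \cite{cytoolkit}, which handles the $\N$-graded case, adapting each step to an arbitrary abelian grading group $G$. The forward implication is essentially formal; the substantive work lies in the converse, where we must promote an ungraded projective resolution and an ungraded $\Ext$ isomorphism to their $G$-graded counterparts.

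For the forward direction, the forgetful functor $F \colon \mathrm{gr}\text{-}A^{e} \to A^{e}\text{-Mod}$ is exact and sends finitely generated graded projectives (direct sums of shifts of $A^{e}$) to finitely generated projectives, so a finite graded projective resolution of $A$ becomes a finite ungraded one, giving homological smoothness. Moreover, $\Ext_{A^{e}}^{i}(A, A^{e})$ in the graded category is computed by applying $\Hom_{A^{e}}$ to such a resolution and then taking cohomology, and this agrees with ungraded $\Ext$ after forgetting the grading. A graded bimodule isomorphism then restricts to an ungraded one, and the twisted Calabi--Yau property descends.

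For the converse, the first task is to replace a finite ungraded projective $A^{e}$-resolution of $A$ with a finite graded one. Writing $A$ as a cyclic graded $A^{e}$-module via the multiplication map $A^{e} \to A$, I would iteratively construct graded finitely generated free covers of the successive syzygies, exploiting the fact that the shifted free modules $A^{e}(\sigma)$ for $\sigma \in G$ generate $\mathrm{gr}\text{-}A^{e}$ by finitely generated projectives. After $d$ steps, the $d$-th syzygy $\Omega^{d} A$ is a finitely generated graded module that is projective as an ungraded $A^{e}$-module, since ungraded $\pd_{A^e} A = d$. The key lemma, which I expect to be the main technical obstacle, is that $\Omega^{d} A$ is then also projective in $\mathrm{gr}\text{-}A^{e}$; one proves this in the style of Nastasescu--Van Oystaeyen, either by producing a graded splitting of the last short exact sequence directly, or by observing that the graded endomorphism ring of $A^{e}$ surjects appropriately onto the relevant $\Hom$ groups between graded free modules and $\Omega^{d} A$ by finite generation.

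Given such a finite graded projective resolution $P_{\bullet} \to A$, compute $\Ext_{A^{e}}^{i}(A, A^{e})$ as the cohomology of $\Hom_{A^{e}}(P_{\bullet}, A^{e})$; each $\Hom_{A^{e}}(P_{i}, A^{e})$ inherits a natural $G$-grading because $P_{i}$ is finitely generated graded and $A^{e}$ is graded, so these cohomology groups carry a $G$-graded $(A,A)$-bimodule structure. This yields graded refinements of the $\Ext$ vanishing in degrees $i \neq d$ and of the isomorphism in degree $d$, endowing $U \cong \Ext_{A^{e}}^{d}(A, A^{e})$ with a $G$-graded bimodule structure. It remains to check that $U$ is $G$-graded invertible; this follows because its inverse can be realized as $\Hom_{A^{e}}(U, A^{e})$ (or the appropriate tensor-dual), which is again naturally graded, and the evaluation pairings giving invertibility are then graded morphisms by naturality. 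Putting everything together shows that $A$ is $G$-graded twisted Calabi--Yau of dimension $d$.
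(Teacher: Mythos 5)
Your route is, in substance, the paper's own: there the theorem is a three-line assembly of two lemmas proved beforehand --- that $A$ is homologically smooth iff $G$-graded homologically smooth, and that a graded $(A,A)$-bimodule is invertible iff $G$-graded invertible --- together with the observation that $\HOM_{A^e}(P_i,A^e)=\Hom_{A^e}(P_i,A^e)$ for finitely generated graded projectives $P_i$, so that graded and ungraded $\Ext^i_{A^e}(A,A^e)$ coincide. You have essentially inlined those lemmas. One step in your converse is unjustified as written: to ``iteratively construct graded finitely generated free covers of the successive syzygies'' you need each graded syzygy $\ker(\varepsilon_i)$ to be finitely generated, and with no noetherian hypothesis this is not automatic. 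The paper supplies it by applying the generalized Schanuel lemma at each stage to compare the partially built graded resolution with the given ungraded finite resolution: the resulting isomorphism $\ker(d_i)\oplus Q_i\oplus P_{i-1}\oplus\cdots\cong\ker(\varepsilon_i)\oplus P_i\oplus Q_{i-1}\oplus\cdots$ forces $\ker(\varepsilon_i)$ to be finitely generated, and the same comparison at the final stage shows the last syzygy is finitely generated and (ungraded) projective. Your ``key lemma'' --- that this last syzygy is then projective in the graded category --- is exactly the standard fact that a graded module which is projective after forgetting the grading is graded projective, which the paper records as known rather than reproving. With the Schanuel comparison added, your argument coincides with the paper's.
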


With this theorem in hand, we now have an easy way of locating examples of group-graded twisted Calabi--Yau algebras from known examples: any algebra that is simultaneously $G$-graded and twisted Calabi--Yau is G-graded twisted Calabi--Yau. We initially would like to survey some of the examples that can occur. In addition to the Weyl algebras, we present further examples of $\Z$-graded Calabi--Yau algebras including examples arising from generalized Weyl algebras, as well as universal enveloping algebras of finite dimensional Lie algebras.

In Section \ref{sec:localization} of this paper, we use localization at left and right denominator subsets to generate $G$-graded Calabi--Yau algebras from known $\N$-graded examples.

\begin{thm}[Theorem \ref{thm:localization}]
    Let $A$ be a $k$-algebra, and let $S$ be a left and right denominator subset of $A$. If $A$ is twisted Calabi--Yau of dimension $d$, then $S^{-1}A$ is twisted Calabi--Yau of dimension $d$.
\end{thm}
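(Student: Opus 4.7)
Let $B \coloneqq S^{-1}A$. My plan rests on two standard properties of Ore localization at a two-sided denominator set: $B$ is flat over $A$ on both sides, and the structural map $A \to B$ is a ring epimorphism; equivalently, multiplication $B \otimes_A B \to B$ is an isomorphism. Both properties propagate to enveloping algebras, so $B^e = B \otimes B^{\op}$ is flat over $A^e$, and the induced map $A^e \to B^e$ is again a ring epimorphism.

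To establish homological smoothness of $B$, I would fix a finite-length resolution $P_\bullet \to A$ by finitely generated projective $A^e$-modules, guaranteed by the hypothesis on $A$. Applying $B^e \otimes_{A^e} (-)$ preserves exactness by flatness, and each $B^e \otimes_{A^e} P_i$ is finitely generated projective over $B^e$; the augmented complex $B^e \otimes_{A^e} P_\bullet \to B^e \otimes_{A^e} A = B$ is therefore a finite projective resolution.

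For the Ext computation I would combine the above resolution, the tensor-hom adjunction, and the standard identification $\operatorname{Hom}_{A^e}(P_i, A^e) \otimes_{A^e} B^e \cong \operatorname{Hom}_{A^e}(P_i, B^e)$ (which holds on $A^e$ and extends to summands of free modules since each $P_i$ is finitely generated projective). Flatness of $B^e$ over $A^e$ lets $- \otimes_{A^e} B^e$ commute with cohomology, so this chain of isomorphisms produces
\begin{align*}
\Ext_{B^e}^i(B, B^e) \cong \Ext_{A^e}^i(A, A^e) \otimes_{A^e} B^e.
\end{align*}
The right-hand side vanishes for $i \neq d$ and reduces to $U \otimes_{A^e} B^e \cong B \otimes_A U \otimes_A B$ for $i = d$.

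The remaining and most delicate step is to verify that $B \otimes_A U \otimes_A B$ is an invertible $B$-bimodule. Letting $V$ denote the inverse of $U$ in the category of $A$-bimodules, my candidate inverse is $B \otimes_A V \otimes_A B$, and I would reduce the desired isomorphism
\begin{align*}
(B \otimes_A U \otimes_A B) \otimes_B (B \otimes_A V \otimes_A B) \cong B \otimes_A (U \otimes_A V) \otimes_A B \cong B
\end{align*}
to the fact that the base-change functor $B \otimes_A (-) \otimes_A B$ from $A$-bimodules to $B$-bimodules is monoidal. This monoidality is a consequence of $A \to B$ being a ring epimorphism: the canonical map $X \otimes_A Y \to X \otimes_B Y$ is an isomorphism for $B$-bimodules $X, Y$, which allows me to collapse the interior copy of $B$ in the display. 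Justifying this carefully is the main obstacle; the symmetric calculation on the other side closes the argument.
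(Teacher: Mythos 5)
Your first two steps are, up to language, the paper's own argument: tensoring the finite resolution with $B^e$ over $A^e$ is literally localization at the two-sided denominator set $\S = S\otimes S^{\op}$ of $A^e$ (Lemmas \ref{leftrightdemon}, \ref{local@tensor}, \ref{localsmooth}), and your identification $\Ext^i_{B^e}(B,B^e)\cong \Ext^i_{A^e}(A,A^e)\otimes_{A^e}B^e$ is the paper's Lemma \ref{localhh}, with your use of finite generation and projectivity of the $P_i$ substituting for the paper's appeal to finite presentation of $A$ (Lemma \ref{homsmoothfp}). These parts are correct.

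The gap is in the invertibility step, exactly where you flag ``the main obstacle.'' Monoidality of $M\mapsto B\otimes_A M\otimes_A B$ is \emph{not} a formal consequence of $A\to B$ being a (flat) ring epimorphism. The epimorphism property gives you $X\otimes_A Y\xrightarrow{\sim} X\otimes_B Y$ only when $X$ is already a right $B$-module and $Y$ a left $B$-module, and in the expression $B\otimes_A U\otimes_A B\otimes_A V\otimes_A B$ neither grouping around the interior copy of $B$ satisfies this a priori: $B\otimes_A U = S^{-1}U$ carries only a right $A$-action, and $V\otimes_A B = VS^{-1}$ only a left $A$-action. Collapsing the interior $B$ is equivalent to knowing that the one-sided localizations of an invertible bimodule agree, $S^{-1}U\cong US^{-1}\cong S^{-1}US^{-1}$; for a general $(A,A)$-bimodule over a noncommutative ring this fails (the left and right localizations genuinely differ), so some input beyond the epimorphism formalism is required. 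The paper supplies it as Lemma \ref{bimoduleloc}, using that invertibility forces $U\cong\Hom_A(V,A)$ with $U,V$ finitely generated projective on both sides, together with the commutation of $\Hom$ with localization for finitely presented modules. Once you insert that lemma, your computation
\begin{align*}
(B\otimes_A U\otimes_A B)\otimes_B(B\otimes_A V\otimes_A B)\cong B\otimes_A(U\otimes_A V)\otimes_A B\cong B
\end{align*}
goes through and your proof closes; without it, the claimed monoidality is unjustified.
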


Perhaps a surprising consequence of this theorem is that the field $k(t_1, \dots, t_n)$ provides an example of a (trivially) $\Z$-graded Calabi--Yau algebra of dimension $n$. This signals a shift from the usual perspective of Calabi--Yau algebras being a suitable candidate for noncommutative polynomial spaces, and that perhaps something more is going on here. 

\subsection*{Acknowledgements} 
This work constitutes a portion of the author's PhD thesis. She is advised by Manuel Reyes, and thanks him for his feedback, encouragement, and proofreading of this paper. She also thanks the referee for their careful reading of the paper and providing suggestions which have improved its quality and clarity.

\section{Homologically smooth algebras and invertible bimodules: The group-graded flavors}
\label{sec:homsmooth}

The purpose of this section is to build a repertoire of preliminary results concerning the group-graded versions of well-known results regarding homologically smooth algebras and invertible bimodules in the $\N$-graded setting. Most of the proofs will follow almost immediately from existing arguments (which we point to in the literature), but with careful extensions to the more general group-graded case and detail-checking provided.

Let us first agree on a few conventions surrounding graded rings and their module categories. Let $G$ denote an abelian group with identity $e$, and whose operation we write multiplicatively. For a ring $R$ to be $G$-graded means that $R$ has a decomposition into additive subgroups $R = \bigoplus_{\sigma \in G} R_\sigma$ such that $R_{\tau}R_{\sigma} \subseteq R_{\tau\sigma}$ for all $\tau,\sigma \in G$. Unless otherwise specified, ``module'' by itself is understood to mean a left module. Let $R$ be a $G$-graded ring. For an $R$-module $M$ to be $G$-graded means that $M$ has a decomposition into additive subgroups $M = \bigoplus_{\sigma \in G} M_{\sigma}$ such that $R_{\tau} M_{\sigma} \subseteq M_{\tau \sigma}$ for all $\tau,\sigma \in G$. Given a $G$-graded ring $R$, by $\catname{_RMod}$ we mean the usual category of left $R$-modules, and by $\grmod{G}$ we mean the category of $G$-graded left $R$-modules. Objects in this graded category are $G$-graded left $R$-modules, and morphisms are the degree-preserving morphisms of $\catname{_RMod}$. When $G$ is clear from context, we will often write $\grmodr$. 

Let $\Hom_R(-,-)$ denote homomorphism groups of left modules, and let $\Hom_{R^{\op}}(-,-)$ denote those of right modules. Further, we require that homomorphisms of $R$-modules act opposite that of scalars. Given $M,N$ objects in $\grmodr$ and $f \in \Hom_R(M,N)$, we say that $f$ is a graded morphism of degree $\tau \in G$, if $f(M_{\sigma}) \subseteq N_{\sigma \tau}$ for all $\sigma \in G$, and we write $f \in \Hom_{R}(M,N)_{\tau}$. Analogously for the right, we say that $f \in \Hom_{R^{\op}}(M,N)$ is a graded morphism of degree $\tau \in G$, if $f(M_{\sigma}) \subseteq \tau \sigma$ for all $\sigma \in G$. We define the ``total'' Hom-group as follows:
\begin{align*}
    \HOM(M,N) \coloneqq \bigoplus_{\tau \in G} \Hom_R(M,N)_{\tau} \subseteq \Hom_R(M,N).
\end{align*}
We point out that $\HOM(M,N)_e = \Hom_{\grmodr}(M,N)$, and that in general, the inclusion above is strict. However, it is well understood that in the case where $M$ is finitely generated in $\grmodr$, equality holds \cite[Corollary 2.4.4]{sacred2}. We can similarly define the total right-derived functor of $\HOM_R(-,-)$, which we will denote as $\Ext^i_R(-,-)$ for $i \geq 0$.

\subsection{Homologically smooth algebras}

Define the enveloping algebra of a $k$-algebra $A$ as $A^{e} := A \otimes_k A^{\op}$ and recall that $A$ is said to be \textit{homologically smooth} if it has a finite length resolution of finitely generated projectives in the category of $A^e$-modules; that is, $A$ is perfect as a left $A^e$-module. The graded version of this definition is a straightforward extension: if $A$ is $G$-graded, then we consider such a resolution in the $G$-graded $A^e$-module category. In developing graded versions of definitions, a common theme becomes the question of when the graded and ungraded versions are equivalent. As an example of such nuances, consider the fact that a projective object in $\grmodr$ is the same as a graded and projective object in $\rmod$, but the same is not the case if we are to replace ``projective'' with ``injective'', or perhaps even more surprisingly ``free'' (cf. \cite[p.21]{sacred2}). However, the graded and ungraded definitions of homologically smooth interact nicely. The following result is an adaptation of \cite[Lemma 2.4]{cytoolkit}, where it was proven in the $\N$-graded case.
 
\begin{lem} \label{gradedhomsmooth}
    Let $A$ be a $G$-graded $k$-algebra. Then $A$ is homologically smooth if and only if it is $G$-graded homologically smooth.
\end{lem}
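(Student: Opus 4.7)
The plan is to prove the two directions separately. The forward direction---$G$-graded homologically smooth implies homologically smooth---is essentially a forgetful observation: a finite resolution of $A$ by graded finitely generated projective $A^{e}$-modules remains, once the grading is forgotten, a finite resolution by finitely generated projective $A^{e}$-modules, since graded projectivity coincides with graded-plus-projective, and graded finite generation trivially implies ordinary finite generation.

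For the converse, I would first record two bookkeeping facts that make the graded analog of the standard construction run cleanly. First, for a graded $A^{e}$-module, finite generation in $\grmod{G}$ is equivalent to finite generation in $\catname{_{A^e}Mod}$: given ungraded generators $m_{1}, \ldots, m_{n}$, their finitely many nonzero homogeneous components form a set of homogeneous generators. Second, as recalled in the excerpt, a graded module is graded-projective precisely when it is projective in $\catname{_{A^e}Mod}$.

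The core argument then constructs a graded resolution of $A$ by graded finitely generated free $A^{e}$-modules inductively. I would begin with the graded surjection $A^{e} \twoheadrightarrow A$ induced by multiplication and let $K_{0}$ denote its graded kernel. At each subsequent stage $n \geq 1$, I would choose a graded surjection $F_{n} \twoheadrightarrow K_{n-1}$ from a graded finitely generated free $A^{e}$-module, built from finitely many homogeneous generators of $K_{n-1}$ with appropriate degree shifts, and let $K_{n}$ denote the resulting graded kernel. The only non-automatic step is verifying that each $K_{n-1}$ is graded finitely generated, and this is where the ungraded hypothesis enters: fixing an ungraded finite resolution of $A$ by finitely generated projective $A^{e}$-modules supplied by homological smoothness, an iterated Schanuel's lemma comparison between the partial graded resolution and this ungraded one identifies $K_{n-1}$---up to a direct summand by a finitely generated projective module---with the corresponding syzygy of the ungraded resolution. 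The ungraded syzygy is finitely generated, so $K_{n-1}$ is finitely generated in $\catname{_{A^e}Mod}$, and by the first bookkeeping fact it is therefore graded finitely generated.

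Finally, if $d$ denotes the ungraded projective dimension of $A$, the same Schanuel comparison at stage $d$ forces $K_{d-1}$ to be projective as an ungraded $A^{e}$-module. By the second bookkeeping fact, $K_{d-1}$ is graded projective, and by the preceding step it is graded finitely generated, so truncating at $K_{d-1}$ yields the desired finite resolution of $A$ by graded finitely generated projective $A^{e}$-modules. The only technically delicate point in the whole argument is the Schanuel-type bookkeeping that propagates finite generation through the inductive construction; the remainder is a direct translation of the ungraded projective-resolution arguments into the graded category.
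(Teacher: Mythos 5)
Your proposal is correct and takes essentially the same route as the paper: the forward direction is the same forgetful-functor observation, and the converse is the same inductive construction of a graded finitely generated resolution, using generalized Schanuel comparisons against the fixed ungraded resolution to propagate finite generation of the kernels and, at the final stage, to force the last kernel to be projective. The only cosmetic differences are that you use graded free covers built from homogeneous generators where the paper allows graded projectives, and that you isolate the two bookkeeping facts (graded vs.\ ungraded finite generation, graded vs.\ graded-plus-projective) explicitly rather than invoking them in passing.
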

\begin{proof}
    Let $A$ a $G$-graded $k$-algebra. Suppose first that $A$ is $G$-graded homologically smooth. Since the forgetful functor from $\grmodchoice{A^e}$ to $\catname{_{A^e}Mod}$ which removes grading preserves projectives, it follows that $A$ is homologically smooth. Conversely, suppose that $A$ is homologically smooth with the following resolution of finitely generated and projective $A^e$-modules:
    \begin{align*}
        0 \to P_{n} \xrightarrow{} \dots \to P_{1} \xrightarrow{} P_{0} \xrightarrow{d_0} A \to 0
    \end{align*}
    Since $A$ is graded and finitely generated as an $A^e$-module, a finite set of homogeneous generators can be selected for $A$ (i.e., $A$ is graded finitely generated). In particular, we can select $Q_0$ a projective and finitely generated object of $\grmodchoice{A^e}$ so that we have the graded surjection $Q_0 \xtwoheadrightarrow{\varepsilon_0} A$. We now continue this selection of $Q_i$ inductively. That is, suppose we have found $Q_i$ with corresponding graded surjections $\varepsilon_{i}: Q_i \to \ker(\varepsilon_{i-1})$ for $0 \leq i < n - 1$, at which point we can consider the exact sequences
    \begin{align*}
        0 \to \ker(d_{i}) \to P_i \to P_{i-1} \to \dots \to P_1 \to P_0 \to A \to 0\text, \\
        0 \to \ker(\varepsilon_{i}) \to Q_i \to Q_{i-1} \to \dots \to Q_1 \to Q_0 \to A \to 0.
    \end{align*}
   Since $P_{i+1} \xtwoheadrightarrow{} \ker(d_{n-1})$, it follows that $\ker(d_{n-1})$ is finitely generated, and an application of (generalized) Schanuel's Lemma gives the isomorphism
    \begin{align*}
        \ker(d_i) \oplus Q_i \oplus P_{i-1} \oplus \cdots \cong \ker(\varepsilon_i) \oplus P_i \oplus Q_{i-1} \oplus \cdots
    \end{align*}
    holding in $\catname{_{A^e}Mod}$. Since the entirety of the left-hand side is finitely generated, it follows that $\ker(\varepsilon_i)$ must be finitely generated as well. In particular, $\ker(\varepsilon_i)$ is graded finitely generated, so we can select $Q_{i+1} \in \grmodchoice{A^e}$ graded finitely generated and projective with $\varepsilon_{i+1}: Q_{i+1} \xtwoheadrightarrow{} \ker(\varepsilon_{i})$. Continue this process until $Q_{n-1}$ has been constructed, as which point we compare the following two exact sequences:
    \begin{align*}
        0 \to P_{n} \xrightarrow{} \dots \to P_{1} \xrightarrow{} P_{0} \xrightarrow{} A \to 0, \\
        0 \to \ker(\varepsilon_{n-1}) \to Q_{n-1} \dots \to Q_1 \to Q_0 \to A \to 0.
    \end{align*} 
   Yet another application of (generalized) Schanuel's Lemma yields the following isomorphism of $A^e$-modules:
    \begin{align*}
        P_n \oplus Q_{n-1} \cdots \cong \ker(\varepsilon_{n-1}) \oplus P_{n-1} \oplus \cdots.
    \end{align*}
    Since the entirety of the left hand side is projective and finitely generated, the same holds true of $\ker(\varepsilon_{n-1})$. In particular, $\ker(\varepsilon_{n-1})$ is a finitely generated and projective object in $\catname{gr_{A^e}Mod}$, and we see that the constructed resolution guarantees that $A$ is $G$-graded homologically smooth, as desired.
\end{proof}

\subsection{Invertible bimodules}
\label{sec:invertbimod}

As was done for homologically smooth algebras, we now wish to extend results regarding invertible bimodules to their $G$-graded counterparts. Let $A$ a $G$-graded $k$-algebra. In the ungraded world, an $(A,A)$-bimodule $U$ is said to be invertible if there exists an $(A,A)$-bimodule $V$ such that $U \otimes_A V \cong V \otimes_A U \cong A$ as $(A,A)$-bimodules. To define the $G$-graded version, we need first make sense of the graded tensor product, which we denote by $\underline{\otimes}$. We say that the degree $\gamma$ component of $U \underline{\otimes}_A V$ is the vector subspace spanned by all pure tensors of the form $u_\tau \otimes v_\sigma$ such that $\tau\sigma=\gamma$. That is, $(U \underline{\otimes}_A V)_\gamma = \bigoplus_{\sigma \in G} U_{\gamma \sigma^{-1}} \otimes_A V_{\sigma}$. Now, if $U$ is an $(A,A)$-bimodule that carries a $G$-grading, then we say that $U$ is \emph{$G$-graded invertible} if there exists $V$ a $G$-graded $(A,A)$-bimodule such that $U \underline{\otimes}_A V \cong V \underline{\otimes}_A U \cong A$ as graded $(A,A)$-bimodules.

Of special importance to us is the graded and invertible $(A,A)$-bimodule $^1A^\nu$. This bimodule should be understood as the $(A,A)$-bimodule $A$ with normal left action, but right action twisted by $\nu$, a graded automorphism of degree $\tau$ of $A$. In particular, for all $a,b,c \in A$ we have the following:
$$
a \cdot b \cdot c := ab\nu(c).
$$
Recall further that we have made the assumption that all bimodules $M$ be $k$-central; that is, scalars from $k$ commute with elements of $M$. The reason for this restriction is twofold. Insisting on bimodules being $k$-central guarantees that given an $(A,A)$-bimodule, we can view it as a left or right $A^e$-module via the action 
\begin{align*}
    (a_1 \otimes a_2^{\op}) \cdot m = a_1 \cdot m \cdot a_2 = m \cdot (a_2 \otimes a_1^{\op})\text.
\end{align*}
Note that since our grading group $G$ is abelian, this also is enough to establish a correspondence between $G$-graded $A^e$-modules and $G$-graded $(A,A)$-bimodules. Additionally, $k$-centrality of $(A,A)$-bimodules will induce a $k$-linear Morita autoequivalence of $\mymod{A}$. This will be particularly important in our next lemma, which is an yet another update of \cite[Lemma 2.10]{cytoolkit}, where we prove the result holds in the more general $G$-graded case.

\begin{lem} \label{gradedinvertible}
    Let $A$ be a $G$-graded $k$-algebra, and let $U$ be a $G$-graded $(A,A)$-bimodule. Then $U$ is invertible if and only if it is $G$-graded invertible.
\end{lem}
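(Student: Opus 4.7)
The plan is to handle each direction separately. The forward direction is immediate: if $U$ has a $G$-graded inverse $V$, then forgetting gradings exhibits $V$ as an ungraded inverse, since the underlying ungraded bimodule of $U \underline{\otimes}_A V$ is just $U \otimes_A V$.

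For the converse, suppose $U$ is invertible and carries a compatible $G$-grading. I would take as candidate inverse $V := \Hom_A(U,A)$ (with $U$ viewed as a left $A$-module), equipped with its standard $(A,A)$-bimodule structure $(a \cdot f \cdot b)(u) = f(u \cdot a) \cdot b$. Morita theory tells us $U$ is a left $A$-progenerator---in particular finitely generated as a left $A$-module---and yields canonical evaluation isomorphisms $U \otimes_A V \cong A$ and $V \otimes_A U \cong A$ of $(A,A)$-bimodules. Replacing any finite generating set for $U$ by its homogeneous components shows $U$ is graded finitely generated, whence \cite[Corollary 2.4.4]{sacred2} gives
\[ V = \Hom_A(U,A) = \HOM_A(U,A) = \bigoplus_{\tau \in G} \Hom_A(U,A)_\tau, \]
endowing $V$ with a $G$-grading, namely $f \in V_\tau$ iff $f(U_\sigma) \subseteq A_{\sigma \tau}$ for all $\sigma \in G$. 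A short calculation using that $G$ is abelian verifies that the bimodule structure respects this grading: for $a \in A_\rho$, $b \in A_\mu$, $f \in V_\tau$, and $u \in U_\sigma$, one has $(a \cdot f \cdot b)(u) = f(u \cdot a) \cdot b \in A_{\sigma \rho \tau \mu}$, so $a \cdot f \cdot b \in V_{\rho \tau \mu}$ as required.

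It remains to check the Morita evaluation maps are graded of degree $e$. The map $u \otimes f \mapsto f(u)$ sends a homogeneous pure tensor $u_\sigma \otimes f_\tau \in (U \underline{\otimes}_A V)_{\sigma \tau}$ to $f_\tau(u_\sigma) \in A_{\sigma \tau}$, so it preserves degree; being an ungraded bimodule isomorphism, it is automatically a graded bimodule isomorphism (its inverse necessarily preserves degrees as well). The case $V \otimes_A U \cong A$ is symmetric. The main obstacle is purely bookkeeping: tracking degree conventions so that $V$'s induced grading is compatible with both its bimodule structure and the Morita isomorphisms under $\underline{\otimes}_A$. Abelianness of $G$ makes the degree arithmetic commute on the nose, which is precisely why the argument of \cite[Lemma 2.10]{cytoolkit} in the $\N$-graded case generalizes with no substantive change.
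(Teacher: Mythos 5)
Your overall strategy coincides with the paper's: handle the easy direction by forgetting the grading, and for the converse take the dual module as the candidate inverse, use graded finite generation of $U$ to identify $\Hom$ with $\HOM$ and thereby grade the dual, and then check that the Morita evaluation maps and the bimodule structure on the dual respect degrees. All of those steps in your write-up are correct, including the degree bookkeeping that uses abelianness of $G$.

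There is, however, one genuine gap, and it is exactly the point at which the paper is more careful than you are. You take $V := \Hom_A(U,A)$ (the \emph{left}-module dual) as the inverse on both sides and assert that ``the case $V \otimes_A U \cong A$ is symmetric.'' It is not: with the bimodule structure $(a\cdot f\cdot b)(u) = f(ua)b$ that you specify, the right $A$-action on $V$ is $(f\cdot b)(u) = f(u)b$, so the evaluation pairing sends $(f\cdot b,\, u)$ to $f(u)b$ but sends $(f,\, b\cdot u)$ to $f(bu) = b\,f(u)$; over a noncommutative $A$ these disagree, so ``evaluation'' does not even descend to a well-defined map on $V \otimes_A U$. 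The paper avoids this by using \emph{two} duals: the evaluation $U \otimes_A \Hom_A(U,A) \to A$ is balanced, but for the other side one uses the right-module dual, giving a balanced evaluation $\Hom_{A^{\op}}(U,A) \otimes_A U \to A$ (right-module homomorphisms commute past the left $A$-action), and then identifies $\Hom_A(U,A) \cong \Hom_{A^{\op}}(U,A)$ as graded bimodules for invertible $U$ so that a single $V$ serves as a two-sided inverse. Your argument is repairable along these lines (or by routing $V \otimes_A U$ through $\Hom_A(U,U) \cong A$ and checking that map is graded), but as written the second graded isomorphism is unproved.
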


\begin{proof}
    Let $A$ be a $G$-graded $k$-algebra, and let $U$ be a $G$-graded $(A,A)$-bimodule. Suppose that $U$ is invertible. Then, $U \otimes_A -: \mymod{A} \to \mymod{A}$ provides an autoequivalence of the category of left $A$ modules, and as such $U$ is finitely generated and projective on the left and right, as well as a generator for both $\mymod{A}$ and $\catname{Mod\text{-}A}$. Finite generation of $U$ as a left module gives equality of the total Hom-group $\HOM_A(U,A)$ and $\Hom_A(U,A)$ on the level of sets, and the map which sends a graded morphism to its ungraded counterpart provides an isomorphism of abelian groups. We similarly have $\HOM_{A^{\op}}(U,A) \cong \Hom_{A^{\op}}(U,A)$ as abelian groups. 

    Now, standard results of Morita theory (cf. \cite[Lemma 18.17]{sacred3}) assert that 
    \begin{align*}
        &\Phi: \Hom_{A^{\op}}(U,A) \otimes_A U \to A \\
        (\text{resp.}) &\Psi: U \otimes_A \Hom_{A}(U,A) \to A
    \end{align*}
    given by $f \otimes x \mapsto f(x)$ (resp. $x \otimes f \mapsto (x)f$) provides an isomorphism of $(A,A)$-bimodules, and it remains to show that $\Phi$ and $\Psi$ preserve grading. Towards that end, recall that the forgetful functor from the graded to ungraded module categories reflects isomorphisms, so it suffices to argue that $\Phi$ and $\Psi$ are morphisms in the category of graded left $A^{e}$-modules. 

    Let $\Hom_{A^{\op}}(U,A) \otimes_A U = \bigoplus_{\sigma \in G} M_{\sigma}$ where $M_{\sigma}$ is the additive subgroup generated by all elements of the form $f \otimes x$ where $f \in \Hom_{A^{\op}}(U,A)$ has degree $\tau$ and $x \in U$ has degree $\gamma$ such that $\tau \gamma = \sigma$. Consider $f$ and $x$ with these degrees, and that $\Phi(f \otimes x) = f(x) \in A_{\tau \gamma = \sigma}$, so in particular $\Phi(M_{\sigma}) \subseteq A_{\sigma}$ for all $\sigma \in G$. Hence, $\Phi$ provides the following graded isomorphism of $A^e$-modules:
    \begin{align*}
        \Phi: \HOM_{A^{\op}}(U,A) \underline{\otimes}_A U \xrightarrow{\sim} A.
    \end{align*}
    We can similarly define a grading on $U \otimes_A \Hom_A(U,A)$ and check that $\Psi$ preserves grading, taking care to remember that the graded morphism $f \in \Hom_A(U,A)$ acts opposite that of scalars. By a symmetric argument, we end up with the graded isomorphism 
    \begin{align*}
        \Psi: U \underline{\otimes}_A \HOM_{A}(U,A) \xrightarrow{\sim} A\text.
    \end{align*}
    It follows that $U$ is a $G$-graded invertible $(A,A)$-bimodule with inverse bimodule given by $V \cong \HOM_{A^{\op}}(U,A) \cong \HOM_A(U,A)$.
    
    The converse is readily seen by forgetting the grading on $U \underline{\otimes}_A V \cong V \underline{\otimes}_A U \cong A$, and observing that we still have isomorphisms of $(A,A)$-bimodules. We conclude that $U$ is invertible if and only if it is $G$-graded invertible, as desired.
\end{proof}

\section{Group-graded twisted Calabi--Yau algebras} 

\subsection{Definition and Compatibility with Bimodule Structure}
\label{sec:CYdef}
We begin our study by first proposing a definition for group-graded twisted Calabi--Yau algebras which closely mimics the definition of the familiar $\N$-graded case, as given in \cite[Definition 4.1]{cytoolkit}, for example. As would hold for any good definition, we will see that gradings by monoids which embed into groups can be recovered by setting certain pieces to the trivial vector space. Indeed, we will realize the $\N$-graded case as just a $\Z$-grading with negative degree components equal to zero.

\begin{defn} \label{def:g-grade-CY}
Let $A$ be a $G$-graded $k$-algebra. We say that $A$ is \emph{$G$-graded twisted Calabi--Yau of dimension $d$} if $A$ is graded homologically smooth and there exists a $G$-graded invertible $(A,A)$-bimodule $U$ such that the following holds as a graded isomorphism of $(A,A)$-bimodules:
\begin{align*}
    \Ext_{A^{e}}^{i}(A,A^e) \cong
    \begin{cases}
    0, \ i \neq d; \\
    U, \ i = d.
    \end{cases}
\end{align*}
\end{defn}

A few comments are in order. Consider $^1A^\nu$, the graded $(A,A)$-bimodule $A$, with normal left action, but right action twisted by $\nu$, a graded algebra automorphism of $A$. Should Definition $\ref{def:g-grade-CY}$ hold with $U \cong {^1}A^{\nu}(\sigma)$ for some $\sigma \in G$, then we say that $\nu$ is a graded Nakayama automorphism of $A$ and is of Gorenstein index $\sigma$. Should it further hold with $\nu$ the identity map, then we say that $A$ is simply $G$-graded (untwisted) Calabi--Yau. 

With this definition in hand, we may be tempted to ask \textit{how different} are twisted Calabi--Yau algebras that happen to be $G$-graded from $G$-graded twisted Calabi--Yau algebras. Reyes and Rogalski in \cite[Theorem 4.2]{cytoolkit} shows that in the $\N$-graded case, they happen to be equivalent. With a slight modification of their proof, we will see that the same holds true for any group grading. 

\begin{thm} \label{thm:gradcyiff}
    Let $A$ be a $G$-graded algebra. Then $A$ is twisted Calabi--Yau of dimension $d$ if and only if $A$ is $G$-graded twisted Calabi--Yau of dimension $d$.
\end{thm}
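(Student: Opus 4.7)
The strategy is to leverage the two structural lemmas already established—Lemma \ref{gradedhomsmooth} and Lemma \ref{gradedinvertible}—so that the content of the theorem reduces to comparing graded and ungraded $\Ext_{A^e}^i(A, A^e)$ as $(A,A)$-bimodules. The bridge between the two settings is a finitely generated graded projective resolution $P_\bullet \to A$ in $\grmodchoice{A^e}$, which exists under either hypothesis thanks to Lemma \ref{gradedhomsmooth}. Applying $\HOM_{A^e}(-, A^e)$ to such a resolution yields a cochain complex of graded $(A,A)$-bimodules whose cohomology is the graded $\Ext^i_{A^e}(A, A^e)$; after forgetting gradings, the same complex computes the ungraded $\Ext^i_{A^e}(A, A^e)$. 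The identification $\HOM_{A^e}(P_i, A^e) = \Hom_{A^e}(P_i, A^e)$, valid since each $P_i$ is graded finitely generated \cite[Corollary 2.4.4]{sacred2}, is the key technical point ensuring that these two computations match even as $(A,A)$-bimodules, not merely as abelian groups.

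For the forward direction, assume $A$ is $G$-graded twisted Calabi--Yau with graded invertible bimodule $U$. Lemma \ref{gradedhomsmooth} gives ordinary homological smoothness, and by the observation above, the graded isomorphism $\Ext_{A^e}^i(A,A^e) \cong U$ (or zero for $i \neq d$) descends to an ungraded bimodule isomorphism. Lemma \ref{gradedinvertible} ensures $U$ remains invertible after forgetting the grading, so $A$ is twisted Calabi--Yau of dimension $d$. For the converse, suppose $A$ is twisted Calabi--Yau of dimension $d$ with invertible bimodule $U$. Lemma \ref{gradedhomsmooth} produces a finitely generated graded projective resolution $P_\bullet \to A$ over $A^e$, which simultaneously equips $\Ext_{A^e}^i(A, A^e)$ with a natural $G$-grading whose underlying ungraded bimodule matches the ungraded $\Ext$. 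The vanishing hypothesis for $i \neq d$ transfers immediately. For $i=d$, take $U' := \Ext_{A^e}^d(A, A^e)$ with its intrinsic graded $(A,A)$-bimodule structure; then the identity map is a tautological graded isomorphism $\Ext_{A^e}^d(A, A^e) \cong U'$. Since $U'$ is ungraded isomorphic to the invertible $U$, Lemma \ref{gradedinvertible} promotes $U'$ to a $G$-graded invertible bimodule, completing the verification.

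The main subtlety I anticipate is ensuring the graded structure on $\Ext^i_{A^e}(A, A^e)$ obtained from a graded projective resolution is genuinely compatible with its ungraded $(A,A)$-bimodule structure, and that it is well-defined up to graded bimodule isomorphism independent of the chosen resolution. This is precisely the role of the identification $\HOM_{A^e}(P_i, A^e) = \Hom_{A^e}(P_i, A^e)$ for finitely generated $P_i$; without it, one would only obtain agreement of graded and ungraded $\Ext$ as abelian groups, and both the transfer of bimodule isomorphisms and the invertibility argument via Lemma \ref{gradedinvertible} would fail. Once this compatibility is in hand, the two lemmas of Section \ref{sec:homsmooth} do the rest of the work, and no adjustment to the $\N$-graded argument of \cite[Theorem 4.2]{cytoolkit} beyond replacing ``$\N$-graded'' with ``$G$-graded'' is needed.
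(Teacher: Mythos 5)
Your proposal is correct and follows essentially the same route as the paper: both directions reduce to Lemma \ref{gradedhomsmooth} for homological smoothness, the identification $\HOM_{A^e}(P_i,A^e) = \Hom_{A^e}(P_i,A^e)$ for graded finitely generated projectives to match the graded and ungraded $\Ext$ groups as bimodules, and Lemma \ref{gradedinvertible} to transfer invertibility of $\Ext^d_{A^e}(A,A^e)$ between the graded and ungraded settings. Your write-up simply spells out in more detail the compatibility of the graded structure on $\Ext$ that the paper's proof treats implicitly.
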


\begin{proof}
    Observe first that Lemma \ref{gradedhomsmooth} gives that $A$ is $G$-graded and homologically smooth if and only if $A$ is $G$-graded homologically smooth. Whether $A$ is homologically smooth or graded homologically smooth, we know that $\HOM_{A^e}(P_i,A^e) \cong \Hom_{A^e}(P_i,A^{e})$ for $P_i$ a finitely generated and projective $A^e$-module, so in taking cohomology we have $\Ext^i_{A^e}(A,A^e) \cong \ext^i_{A^e}(A,A^{e})$ for all $i \geq 0$. Further, in these cases $\Ext^{d}_{A^e}(A,A^e) \cong \ext^d_{A^e}(A,A^e)$ is invertible if and only if it is graded invertible by Lemma \ref{gradedinvertible}. We conclude that a $G$-graded algebra $A$ is twisted Calabi--Yau of dimension $d$ if and only if it is $G$-graded twisted Calabi--Yau of dimension $d$.
\end{proof}

\begin{example} \label{ex:weylcy} [Weyl Algebras with Standard $\Z$-grading, $\Z^n$-grading]
Let $k$ be a field of characteristic zero. We consider the $n$\textsuperscript{th} Weyl algebra, $A_n$, defined as follows:

\begin{align*}
    A_n := k\langle x_1, \dots, x_n, y_1, \dots, y_n \rangle / ([x_i,y_j] - \delta_{ij}, [x_i,x_j],[y_i,y_j])\text,
\end{align*}
where $[-,-]$ denotes the commutator. We observe that these Weyl algebras possess a $\Z$-grading by setting $\deg(x_i) = 1$ and $\deg(y_i) = -1$ for $1 \leq i \leq n$. On the other hand, we could instead consider $V$ a vector space with a symplectic bilinear form $(-,-)$ and define the Weyl algebra over $V$ by $TV/(xy - yx - (x,y))$. If we let $V$ have basis $(x_1, \dots, x_n, y_1, \dots y_n)$, and form $(x_i,y_j) = \delta_{ij} = -(y_j,x_i), (x_i,x_j) = 0 = (y_i,y_j)$ for $1 \leq i,j \leq n$, then these two notions of Weyl algebras coincide (cf. \cite[Exercise 1.2.3]{deformations}).

For this example, we take the symplectic vector space view. It is well-known that $A_n$ provides an example of a Calabi--Yau algebra of dimension $2n$, and projective resolutions can be found in references such as \cite{suarezalvarez2001algebra} and \cite{deformations}. Since $A_n$ carries a $\Z$-grading, Theorem \ref{thm:gradcyiff} immediately tells us that $A_n$ is $\Z$-graded twisted Calabi--Yau of dimension $2n$. Let us now show that $A_n$ is $\Z$-graded Calabi--Yau with trivial Gorenstein index.

We first consider the resolution of $A_n$ given by \cite{deformations}:
\[
    0 \to A_n \otimes \bigwedge^{2n} V \otimes A_n \to A_n \otimes \bigwedge^{2n-1}V \otimes A_n \to \cdots \to A_n \otimes A_n \twoheadrightarrow{} A_n \to 0,
\]
\begin{align*}
    f \otimes (v_1 \wedge \cdots \wedge v_i) \otimes g \mapsto \sum_{j = 1}^{i}&(-1)^{j-1}(fv_j) \otimes (v_1 \wedge \cdots \hat{v}_j \cdots \wedge v_i) \otimes g \\
    +&(-1)^jf \otimes (v_1 \wedge \cdots \hat{v}_j \cdots \wedge v_i) \otimes (v_jg).
\end{align*}
Our goal is to show that this provides a $\Z$-graded resolution of $A_n$. Recall that if $V$ is a $\Z$-graded vector space, then we have an induced $\Z$-grading on the tensor algebra $T(V)$ as follows: given $v = v_1 \otimes \dots \otimes v_n \in T^n(V)$, we can define $|v| := \sum_{i = 1}^{n} |v_i|$ (cf. \cite[p. 58]{Keller2019}). From here, we have an induced $\Z$-grading on the exterior algebra of $V$, given by considering $T(V) = \bigoplus_{m \in \Z} \left( \bigoplus_{n \in \N} T^n(V) \right)_m$ quotiented by the ideal $(w \otimes w)$ with $w$ a homogeneous elements of $V$. It follows immediately that the objects of the above resolution are $\Z$-graded projective $A_n^e$-modules. 

We need now argue that the differentials are morphisms in the graded module category, which amounts to showing that they are degree-preserving. Consider $f,g \in A_n$ homogeneous of degrees $m_1,m_2$, respectively, and $v = v_1 \wedge \dots \wedge v_i \in \bigwedge^{i}V$ such that each $v_k$ is homogeneous of degree $l_k$, $1 \leq k \leq i$. Then $f \otimes (v_1 \wedge \dots \wedge v_i) \otimes g$ is of degree $m_1 + \sum_{j = 1}^{i}l_j + m_2 = m_1 + |v| + m_2$ in $A_n \otimes \bigwedge^{i}V \otimes A_n$. Now, considering the image of the differential above, notice that each term in the summand preserves the degree. Indeed, for any $1 \leq j \leq i$, we see
\begin{align*}
    (-1)^{j-1} \underbrace{(fv_j)}_{\deg m_1 + l_j} & \otimes \underbrace{(v_1 \wedge \cdots \hat{v}_j \cdots \wedge v_i)}_{\deg |v| - l_j} \otimes \underbrace{g}_{\deg m_2} \\
    + (-1)^j \underbrace{f}_{\deg m_1} & \otimes \underbrace{(v_1 \wedge \cdots \hat{v}_j \cdots \wedge v_i)}_{\deg |v| - l_j} \otimes \underbrace{(v_jg)}_{\deg |v| + m_2}
\end{align*}
has total degree $m_1 + |v| + m_2$ in $A_n \otimes \bigwedge^{i-1}V \otimes A_n$. Hence, the above resolution provides a projective resolution of $A_n$ in the category of $\Z$-graded $A^e$-modules. It immediately follows that $\Ext^{2n}_{A^e}(A,A^e) \cong A_n$ in this category. We conclude that $A_n$ is $\Z$-graded Calabi--Yau of Gorenstein index $0$ for all $n$. 

Let us now shift our view to see $A_n$ with a $\Z^n$-grading: set $|x_i| = \mathbf{e_i} \in \Z^n$ and $|y_i| = - \mathbf{e_i} \in \Z^n$. As above, we have an induced $\Z^n$-grading on the exterior algebra, and the differentials in the above resolution preserve degree just the same. In particular, we have that $A_n$ is $\Z^n$-graded Calabi--Yau of Gorenstein index $0$ for all $n$. In fact, the same observations show that for the same will hold true for any abelian group grading on $A_n$.
\end{example}

\begin{example}[Generalized Weyl Algebras]
First introduced by Bavula in \cite{bavula1992generalized} and later generalized in \cite{bavula1996tensor}, the generalized Weyl algebras (GWAs) of degree $n$ over a ring $D$ are defined as follows. Let $\sigma = (\sigma_1, \dots, \sigma_n)$ be a tuple of $n$ commuting automorphisms of $D$, and let $a = (a_1, \dots, a_n)$ be a tuple of $n$ central elements of $D$ such that $\sigma_i(a_j) = a_j$ for all $i \neq j$. Then the degree $n$ GWA over $D$ is the ring $A = D(\sigma, a)$ generated by $D$ and $2n$ indeterminates $x_1, \dots, x_n, y_1, \dots, y_n$ which are subject to the relations
\begin{align*}
    y_ix_i &= a_i, \\
    x_iy_i &= \sigma_i(a_i), \\
    x_i\alpha &= \sigma_i(\alpha) x_i \quad \forall \alpha \in D, \\
     y_i \alpha &= \sigma_i^{-1}(\alpha)y_i \quad \forall \alpha \in D, \\
    [x_i,x_j] &= [y_i,y_j] = [x_i,y_j] = 0 \quad \forall i \neq j.
\end{align*}
Any degree $n$ GWA further possesses a $\Z^n$-grading by setting $|x_i| = \mathbf{e_i}$ and $|y_i| = -\mathbf{e_i}$ (cf. \cite{bavula1992generalized}). 

As in \cite{Liu-HomSmoothGWAI}, consider $D = k[z]$, $\sigma = \sigma_1 = \lambda z + \eta$ for $\lambda \neq 0, \eta \in k$, and $a = a_1$ a polynomial in $k[z]$. Theorem 1.1 of \cite{Liu-HomSmoothGWAI} tells us that the resulting degree $1$ GWA $A = D(\sigma,a)$ is homologically smooth, and in particular twisted Calabi--Yau of dimension $2$, if and only if $a$ has no multiple roots. As in our above discussion, $A$ also has a $\Z$-grading, and is hence $\Z$-graded twisted Calabi--Yau of dimension $2$ by Theorem \ref{thm:gradcyiff}. Further, \cite[Theorem 1.1]{LIU2018228} gives conditions under which a GWA over $D = k[z_1,z_2]$ is twisted Calabi--Yau, which again by our previous discussions, yields examples of $\Z^2$-graded twisted Calabi--Yau algebras. It would be interesting to see more examples of Calabi--Yau algebras arising from GWAs over more general rings $D$. As noted above, all GWAs possess a $\Z^n$-grading, so this would in turn produce more examples of group-graded twisted Calabi--Yau algebras.
\end{example}

\begin{example}[Universal Enveloping Algebras of Finite-Dimensional Lie Algebras]
    Let $\g$ be a $G$-graded Lie algebra with vector space decomposition $\g = \bigoplus_{\sigma \in G} \g_\sigma$ that is compatible with the bracket operation; that is, $[\g_\sigma,\g_\tau] \subseteq \g_{\sigma \tau}$ for all $\sigma, \tau \in G$. It is well-known that if $G$ is an abelian group, then the universal enveloping algebra of $\g$, $U(\g)$, inherits this $G$-grading \cite{yasumura2023universal}, and that if $\g$ is finite dimensional, then $U(\g)$ is twisted Calabi--Yau \cite{rigiddualuea}. Again, in light of Theorem \ref{thm:gradcyiff}, we see that universal enveloping algebras of finitely dimensional $G$-graded Lie algebras are $G$-graded twisted Calabi--Yau. 
\end{example}

\section{Localizations of twisted Calabi--Yau algebras} \label{sec:localization}

It is well-known that $k[t]$ provides an example of a $1$-dimensional $\N$-graded Calabi--Yau algebra, and we can observe that localizing at $t$ turns this connected, $\N$-graded algebra into the connected, strongly $\Z$-graded algebra $k[t,t^{-1}]$. We will show that localization at left and right denominator subsets preserves the twisted Calabi--Yau property, which in turn will give us a new way of finding group-graded twisted Calabi--Yau algebras.

Indeed, closely related to the study of Calabi--Yau algebras is that of Artin--Schelter regular algebras, a definition of which can be found in \cite{rogalski2023artinschelter}, for example. For our purposes, we will take the definition of an Artin--Schelter regular algebra to be a connected $\N$-graded twisted Calabi--Yau algebra, as they were proven to be equivalent in \cite[Lemma 1.2]{Reyes_2014}.  As such, we will find a rich source of $\Z$-graded twisted Calabi--Yau algebras arising as localizations of Artin--Schelter regular algebras.

We start by stating two well-known technical lemmas, the proofs of which we omit. Recall that a multiplicative subset $S$ of a ring $R$ is said to be a left (resp. right) denominator subset if it is left (resp. right) permutable and left (resp. right) reversible. That is,
\begin{itemize}
    \item for all $a \in R$ and $s \in S$, $Sa \cap Rs \neq \emptyset$ (resp. $aS \cap sR \neq \emptyset$),
    \item for all $a \in R$, if $as' = 0$ for some $s' \in S$, then $sa = 0$ for some $s \in S$ (resp. if $s'a = 0$ for some $s' \in S$, then $as = 0$ for some $s \in S$).
\end{itemize}

\begin{lem} \label{leftrightdemon}
    Let $A$ and $B$ be $k$-algebras such that $S_1 \subseteq A$ is a left (resp. right) denominator subset, and such that $S_2 \subseteq B$ is also a left (resp. right) denominator subset. Then, 
    $$T := S_1 \otimes S_2 = \{(s_1 \otimes 1)(1 \otimes s_2) = (1 \otimes s_2)(s_1 \otimes 1) = (s_1 \otimes s_2): s_i \in S_i\}$$
    is a left (resp. right) denominator subset of $A \otimes B$.
\end{lem}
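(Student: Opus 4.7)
The proof reduces to verifying three conditions for $T$ inside $A \otimes B$: multiplicative closure, left (resp.\ right) permutability, and left (resp.\ right) reversibility. Closure is immediate from $(s_1 \otimes s_2)(t_1 \otimes t_2) = s_1 t_1 \otimes s_2 t_2$. The left and right arguments are symmetric, so I would treat only the left case in detail and indicate at the end that the right case follows by the same method.

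My first step would be to record a standard \emph{common denominator} sublemma: if $S$ is left permutable in a ring $R$, $a_1, \dots, a_n \in R$, and $s \in S$, then there exist a single $t \in S$ and $r_1, \dots, r_n \in R$ with $t a_i = r_i s$ for all $i$. This follows by induction on $n$: the base case is the Ore condition, and at the induction step one applies Ore to the pair $(a_{n+1}, s)$ to obtain $t'' a_{n+1} = r_{n+1} s$, then applies Ore again to $(t, t'') \in R \times S$ (with $t$ coming from the inductive hypothesis) to obtain $s' \in S$ and $a' \in R$ with $s' t = a' t''$; the element $s' t \in S$ serves as the common denominator. The analogous \emph{common annihilator} variant also holds: any finite collection of elements, each left-killed by an element of $S$, admits a single common left annihilator in $S$, built by merging pairs $t_1, t_2 \in S$ via permutability into $s t_1 = a t_2 \in S$, which kills both. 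For permutability of $T$, write a given element as a finite sum $a = \sum_{i=1}^n a_i \otimes b_i$, apply the common-denominator sublemma in $A$ to $(a_i; s_1)$ and in $B$ to $(b_i; s_2)$ to obtain $t_1 \in S_1$, $t_2 \in S_2$, $r_i \in A$, $r_i' \in B$ with $t_1 a_i = r_i s_1$ and $t_2 b_i = r_i' s_2$; then $(t_1 \otimes t_2) a = \bigl(\sum_i r_i \otimes r_i'\bigr)(s_1 \otimes s_2)$ is the required Ore relation.

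The main obstacle is reversibility, where the interaction between the two tensor factors obstructs a direct coordinate decomposition of $a$. My plan is to factor $s_1 \otimes s_2 = (s_1 \otimes 1)(1 \otimes s_2)$ and treat each factor in turn, using $k$-linear independence to pass between tensor equations in $A \otimes B$ and separate equations in $A$ or in $B$. Concretely, set $a' := a (s_1 \otimes 1)$, so $a'(1 \otimes s_2) = 0$. Fix a $k$-basis of $A$ and write $a' = \sum_\lambda a'_\lambda \otimes b'_\lambda$ uniquely with $\{a'_\lambda\}$ linearly independent; then $\sum_\lambda a'_\lambda \otimes b'_\lambda s_2 = 0$ forces $b'_\lambda s_2 = 0$ in $B$ for every $\lambda$. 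The common-annihilator sublemma in $B$ produces $t_2 \in S_2$ with $t_2 b'_\lambda = 0$ for all $\lambda$, whence $(1 \otimes t_2) a' = 0$, i.e.\ $(1 \otimes t_2) a (s_1 \otimes 1) = 0$. Setting $a'' := (1 \otimes t_2) a$ and repeating the argument with the roles of the factors swapped (fixing a $k$-basis of $B$ and invoking the common-annihilator sublemma in $A$) produces $t_1 \in S_1$ with $(t_1 \otimes 1) a'' = 0$. Combining yields $(t_1 \otimes t_2) a = 0$ with $t_1 \otimes t_2 \in T$, completing left reversibility and hence the lemma.
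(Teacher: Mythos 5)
Your proof is correct. Note that the paper itself gives no argument for this lemma --- it is stated as one of two ``well-known technical lemmas, the proofs of which we omit'' --- so there is nothing to compare against; your write-up simply supplies the omitted details, and does so by the standard route. All the essential points are handled properly: multiplicative closure is immediate; permutability of $T$ reduces via the common-denominator sublemma (whose induction, merging $t$ and $t''$ through a second application of the Ore condition, is correct) to the Ore conditions in each factor; and the genuinely delicate step, reversibility, is resolved correctly by factoring $s_1\otimes s_2=(s_1\otimes 1)(1\otimes s_2)$, using $k$-linear independence in one tensor factor to force the coordinate equations $b'_\lambda s_2=0$ (resp.\ $a''_\mu s_1=0$), and then invoking the common-annihilator sublemma to produce a single $t_2\in S_2$ (resp.\ $t_1\in S_1$). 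The only cosmetic quibble is the phrase ``$st_1=at_2\in S$'' in the common-annihilator step: $at_2$ lies in $S$ only because it equals $st_1$, which is what you actually use; the logic is sound. You might also remark explicitly that $1\otimes 1\in T$, but this is immediate from $1\in S_i$.
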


We are most interested in the case where we have $A$ a $k$-algebra viewed as a left $A^e$-module. If $S$ is a left (resp. right) denominator subset of $A$, then we can consider localization of $A$ as a left $A$-module, $S^{-1}A$ (resp. as a right $A$-module $AS^{-1}$). To make sense of the localization of $A$ viewed as a left $A^e$-module, we want to consider $T = S \otimes S^{\op}$ for $S \subseteq A$ a left and right denominator set, as in Lemma \ref{leftrightdemon}.

\begin{lem} \label{local@tensor}
    Let $A$ and $B$ be $k$-algebras. Let $S_1 \subseteq A$ be a left and right denominator subset, let $S_2 \subseteq B$ be a left and right denominator subset, and define $T \coloneqq S_1 \otimes S_2$ as in the Lemma \ref{leftrightdemon}. Then $T^{-1}(A \otimes B) \cong S_1^{-1}A \otimes S_2^{-1}B$ as $k$-algebras.  
\end{lem}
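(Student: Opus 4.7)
The plan is to prove this by invoking the universal property of (two-sided) Ore localization twice, once in each direction, to produce mutually inverse algebra maps between $T^{-1}(A \otimes B)$ and $S_1^{-1}A \otimes S_2^{-1}B$.

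First I would construct the forward map. The canonical localization maps $\iota_1 \colon A \to S_1^{-1}A$ and $\iota_2 \colon B \to S_2^{-1}B$ tensor together to give an algebra homomorphism $\phi \coloneqq \iota_1 \otimes \iota_2 \colon A \otimes B \to S_1^{-1}A \otimes S_2^{-1}B$. For any $(s_1 \otimes s_2) \in T$ we have $\phi(s_1 \otimes s_2) = \iota_1(s_1) \otimes \iota_2(s_2)$, which is a unit in $S_1^{-1}A \otimes S_2^{-1}B$ with inverse $\iota_1(s_1)^{-1} \otimes \iota_2(s_2)^{-1}$. Applying Lemma \ref{leftrightdemon} we know $T$ is a left and right denominator subset of $A\otimes B$, so the universal property of Ore localization yields a unique $k$-algebra map
\[
\Phi \colon T^{-1}(A \otimes B) \longrightarrow S_1^{-1}A \otimes S_2^{-1}B
\]
extending $\phi$.

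Next I would construct the reverse map. The composite $A \hookrightarrow A \otimes B \to T^{-1}(A \otimes B)$ sends each $s_1 \in S_1$ to $s_1 \otimes 1 = (s_1 \otimes 1)(1 \otimes 1) \in T$, which is therefore a unit in $T^{-1}(A\otimes B)$; by the universal property of $S_1^{-1}A$ we obtain an algebra map $\psi_1 \colon S_1^{-1}A \to T^{-1}(A \otimes B)$, and symmetrically a map $\psi_2 \colon S_2^{-1}B \to T^{-1}(A \otimes B)$. The key observation here is that the images of $\psi_1$ and $\psi_2$ commute inside $T^{-1}(A \otimes B)$: on the unlocalized elements $a \otimes 1$ and $1 \otimes b$ this is immediate, and commutation with the inverses $(s_1\otimes 1)^{-1}$ and $(1 \otimes s_2)^{-1}$ follows by the standard trick of multiplying through. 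Since the source is a tensor product of $k$-algebras and the images commute, $\psi_1$ and $\psi_2$ assemble into a well-defined $k$-algebra homomorphism
\[
\Psi \colon S_1^{-1}A \otimes S_2^{-1}B \longrightarrow T^{-1}(A \otimes B), \qquad (a s_1^{-1}) \otimes (b s_2^{-1}) \longmapsto (a \otimes b)(s_1 \otimes s_2)^{-1}.
\]

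Finally, I would verify that $\Phi$ and $\Psi$ are mutually inverse by checking on generators: a typical generator $(a \otimes b)(s_1 \otimes s_2)^{-1}$ of $T^{-1}(A\otimes B)$ is sent by $\Phi$ to $(\iota_1(a)\iota_1(s_1)^{-1}) \otimes (\iota_2(b)\iota_2(s_2)^{-1})$, which $\Psi$ returns unchanged, and similarly in the other direction. The main obstacle I anticipate is not conceptual but hygienic: one must justify the commutation of the images $\psi_1(S_1^{-1}A)$ and $\psi_2(S_2^{-1}B)$ carefully in the noncommutative Ore setting, since a priori the denominators could interact badly. Provided this commutativity is handled, together with a clean appeal to the universal property of Ore localization (which here requires $T$ to be both a left and right denominator set, supplied by Lemma \ref{leftrightdemon}), the isomorphism of $k$-algebras follows.
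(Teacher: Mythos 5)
Your proposal is correct. The paper explicitly omits the proof of this lemma (it is stated as one of "two well-known technical lemmas, the proofs of which we omit"), and your argument---constructing mutually inverse maps via the universal property of Ore localization on one side and the universal property of the tensor product of $k$-algebras (applied to the two commuting maps $\psi_1,\psi_2$) on the other---is precisely the standard proof the paper is implicitly invoking; your attention to the commutation of the images of $\psi_1$ and $\psi_2$, which reduces to the fact that an element commuting with an invertible element also commutes with its inverse, is exactly the point that needs care in the noncommutative setting.
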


\begin{lem} \label{localsmooth}
    Let $A$ be a $k$-algebra. If $A$ is homologically smooth, then $S^{-1}A$ is homologically smooth, where $S$ is both a left and right denominator set of $A$. 
\end{lem}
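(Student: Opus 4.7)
The plan is to localize a projective resolution of $A$ over $A^e$ at a suitable two-sided denominator subset and verify that the result is a finite projective resolution of $S^{-1}A$ over $(S^{-1}A)^e$. Since $A$ is homologically smooth, fix a resolution
\begin{align*}
    0 \to P_n \to \cdots \to P_1 \to P_0 \to A \to 0
\end{align*}
by finitely generated projective $A^e$-modules. Set $T \coloneqq S \otimes S^{\op}$. Since $S$ is a left and right denominator subset of $A$, $S^{\op}$ is a left and right denominator subset of $A^{\op}$, so Lemma \ref{leftrightdemon} guarantees that $T$ is a left and right denominator subset of $A^e$. Exactness of the localization functor $T^{-1}(-)$ applied to the above yields a candidate resolution
\begin{align*}
    0 \to T^{-1}P_n \to \cdots \to T^{-1}P_1 \to T^{-1}P_0 \to T^{-1}A \to 0.
\end{align*}

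Each $T^{-1}P_i$ is finitely generated over $T^{-1}(A^e)$, since a finite generating set of $P_i$ localizes to one of $T^{-1}P_i$; it is projective because writing $P_i$ as a direct summand of $(A^e)^{m_i}$ and applying the exact functor $T^{-1}(-)$ exhibits $T^{-1}P_i$ as a direct summand of $(T^{-1}(A^e))^{m_i}$. Lemma \ref{local@tensor} provides the identification $T^{-1}(A^e) \cong (S^{-1}A)^e$, so each $T^{-1}P_i$ is in fact a finitely generated projective $(S^{-1}A)^e$-module.

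It remains to identify $T^{-1}A$ with $S^{-1}A$ as an $(S^{-1}A)^e$-module. Viewing $A$ as a left $A^e$-module, a standard base-change argument gives
\begin{align*}
    T^{-1}A \;\cong\; T^{-1}(A^e) \otimes_{A^e} A \;\cong\; (S^{-1}A)^e \otimes_{A^e} A \;\cong\; S^{-1}A,
\end{align*}
where the middle isomorphism is Lemma \ref{local@tensor} and the final isomorphism reflects the fact that the common two-sided Ore localization $S^{-1}A = AS^{-1}$ (well-defined since $S$ is both a left and a right denominator subset) carries the unique $(S^{-1}A, S^{-1}A)$-bimodule structure extending the canonical $(A,A)$-bimodule structure on $A$. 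Consequently, the localized complex is a finite projective resolution of $S^{-1}A$ as an $(S^{-1}A)^e$-module, and $S^{-1}A$ is homologically smooth.

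The main obstacle is the bookkeeping on the enveloping-algebra side: one must cleanly exchange $T^{-1}(A^e)$ for $(S^{-1}A)^e$ and verify that localization distributes across the bimodule structure as expected. Lemma \ref{local@tensor} together with the two-sidedness of $S$ handles both points, while preservation of finite generation and projectivity under exact localization is standard.
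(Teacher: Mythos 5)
Your proposal is correct and follows essentially the same route as the paper's proof: localize the finite projective $A^e$-resolution at $T = S \otimes S^{\op}$, use exactness and preservation of finitely generated projectives, identify $T^{-1}(A^e)$ with $(S^{-1}A)^e$ via Lemma \ref{local@tensor}, and identify $T^{-1}A$ with $S^{-1}A$. The paper spells out that last identification through the explicit chain $(S^{-1}A)^e \otimes_{A^e} A \cong S^{-1}A \otimes_A A \otimes_A S^{-1}A \cong S^{-1}(S^{-1}A) \cong S^{-1}A$, which is the precise content of your appeal to the two-sided Ore localization.
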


\begin{proof}
    Let $A$ be as above. Since $A$ is homologically smooth, there exists a finite length projective resolution $P_\bullet \to A \to 0$ of finitely generated projective $A^e$-modules. Let $S \subseteq A$ be a left and right denominator subset. It is clear that $S^{\op} \subseteq A^{\op}$ is a left and right denominator subset from our observations above, so we may define the left and right denominator subset $\S := S \otimes S^{\op} \subseteq A \otimes A^{\op} = A^e$ as in Lemma \ref{leftrightdemon}. Let us now localize $P_\bullet \to A \to 0$ at $\S$. Since localization is exact, $\S^{-1}P_\bullet \to \S^{-1}A \to 0$ remains an exact sequence of $\S^{-1}A^e$-modules. It is well known that localizations of finitely generated projective modules remain finitely generated projective modules over the localized ring; hence, $\S^{-1}P_{\bullet} \to \S^{-1}A \to 0$ is a projective resolution of finitely generated $\S^{-1}A^e$-modules.
    
    Letting $B = A^{\op}$ and $S_2 = S^{\op}$ in Lemma \ref{local@tensor}, we know that $\S^{-1}A^e \cong S^{-1}A \otimes (S^{\op})^{-1}A^{\op}$ as rings. Since $S \subseteq A$ is a left and right denominator set of $A$, we also have
    \begin{align*}
        S^{-1}A \otimes (S^{\op})^{-1}A^{\op} &\cong S^{-1}A \otimes (AS^{-1})^{\op} \\
        &\cong S^{-1}A \otimes (S^{-1}A)^{\op} \\
        &= (S^{-1}A)^e.
    \end{align*}

    In particular, this allows us to view the $\S^{-1}A^e$-module resolution $\S^{-1}P_\bullet \to \S^{-1}A \to 0$ instead as that of $(S^{-1}A)^e$-modules. Further,
    \begin{align*}
        \S^{-1}A &\cong \S^{-1}A^e \otimes_{A^e} A \\
        &\cong \left( S^{-1}A \otimes (S^{-1}A)^{\op} \right) \otimes_{A^e} A \\
        &\cong S^{-1}A \otimes_A A \otimes_A S^{-1}A \\
        &\cong S^{-1}A \otimes_A S^{-1}A \\
        &\cong S^{-1}(S^{-1}A) \\
        &\cong S^{-1}A
    \end{align*}
    as $(S^{-1}A)^e$-modules. Thus, $\S^{-1}P_\bullet \to S^{-1}A \to 0$ provides a finite length resolution of finite generated and projective $(S^{-1}A)^e$-modules. We conclude that $S^{-1}A$ is homologically smooth, as desired.
\end{proof}

\begin{lem} 
\label{localhh} [Localization and Hochschild (Co)homology]
Let $A$ be a $k$-algebra which is finitely presented as a left $A^{e}$-module, and let $M$ be an $A^e$-module. Suppose further that $S \subseteq A$ is a left and right denominator set so that we may define $\S  := S \otimes S^{\op} \subseteq A^e$ as in Lemma \ref{localsmooth}. Then, the following hold as isomorphisms of $\S^{-1}A^e$-modules:
\begin{align*}
  \S^{-1}\text{HH}^{i}(A,M) &\coloneqq \S^{-1}\ext_{A^e}^{i}(A,M) \\
&\hphantom:\cong \\
\ext^i_{\S^{-1}A^e}(\S^{-1}A, \S^{-1}M) &\hphantom:=: \text{HH}^{i}(S^{-1}A, \S^{-1}M).
\end{align*}
\end{lem}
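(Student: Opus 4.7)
The plan is to invoke classical flat base change for $\ext$, exploiting the exactness of localization. Since $S$ is both a left and right denominator set in $A$, Lemma~\ref{leftrightdemon} guarantees that $\S = S \otimes S^{\op} \subseteq A^e$ is a left and right denominator set, and Lemma~\ref{local@tensor} identifies $\S^{-1}A^e$ with $(S^{-1}A)^e$. In particular the localization functor $\S^{-1}(-) \cong \S^{-1}A^e \otimes_{A^e} -$ from $A^e$-modules to $\S^{-1}A^e$-modules is exact, so $\S^{-1}A^e$ is flat over $A^e$.

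First I would take a projective resolution $P_\bullet \to A \to 0$ of $A$ by finitely generated projective $A^e$-modules. The hypothesis that $A$ is finitely presented over $A^e$ supplies finitely generated projective $P_0, P_1$; at each subsequent stage the syzygy remains finitely generated (as in the syzygy/Schanuel argument used in Lemma~\ref{gradedhomsmooth}), allowing one to continue the resolution with finitely generated projectives. Applying $\S^{-1}(-)$ produces an exact complex $\S^{-1}P_\bullet \to \S^{-1}A \to 0$ of $\S^{-1}A^e$-modules, and since localization sends finitely generated projectives to finitely generated projectives, this is a projective resolution of $\S^{-1}A$ over $\S^{-1}A^e$, exactly as in the proof of Lemma~\ref{localsmooth}.

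Next I would establish, for each $i$, the natural flat base change isomorphism
\[
\S^{-1}\Hom_{A^e}(P_i, M) \xrightarrow{\sim} \Hom_{\S^{-1}A^e}(\S^{-1}P_i, \S^{-1}M).
\]
One verifies this first when $P_i = A^e$, where both sides collapse to $\S^{-1}M$, then extends by additivity to finite direct sums and to direct summands thereof; the essential input is that $P_i$ is finitely presented as an $A^e$-module. Assembling these isomorphisms for all $i$ and checking compatibility with the differentials yields an isomorphism of cochain complexes
\[
\S^{-1}\Hom_{A^e}(P_\bullet, M) \cong \Hom_{\S^{-1}A^e}(\S^{-1}P_\bullet, \S^{-1}M).
\]

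Finally I would use exactness of $\S^{-1}(-)$ once more to commute it past cohomology, giving
\[
\S^{-1}\ext^{i}_{A^e}(A, M) = \S^{-1}H^i\!\left(\Hom_{A^e}(P_\bullet, M)\right) \cong H^i\!\left(\S^{-1}\Hom_{A^e}(P_\bullet, M)\right),
\]
whose right-hand side is identified with $\ext^{i}_{\S^{-1}A^e}(\S^{-1}A, \S^{-1}M)$ via the projective resolution of $\S^{-1}A$ constructed in the first step. The main obstacle is ensuring that \emph{every} $P_i$ is finitely presented as an $A^e$-module so that the Hom--localization isomorphism propagates through the entire resolution; in the intended applications (most importantly the proof of Theorem~\ref{thm:localization}, where $A$ is homologically smooth) this is automatic, and beyond this point the argument is essentially bookkeeping.
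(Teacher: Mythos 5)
Your proposal is correct and follows essentially the same route as the paper's proof: localize a resolution of $A$ by finitely generated projective $A^e$-modules, apply flat base change for $\Hom$ term by term (reducing to $P_i = A^e$ and passing to finite sums and summands), and use exactness of localization to commute it past cohomology. The one subtlety you flag --- that finite presentation of $A$ only guarantees finitely generated projectives in the first two degrees, whereas the base-change isomorphism on $\ext^i$ needs finitely generated projectives through degree $i$ and beyond --- is genuine and is passed over silently in the paper's proof as well; it is harmless in the intended application (Theorem \ref{thm:localization}), where homological smoothness supplies a resolution by finitely generated projectives in every degree.
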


\begin{proof}
    Let $P_\bullet \to A \to 0$ a projective $A^e$-module resolution of $A$ and consider 
    \begin{align*}
        C^\bullet &:= 0 \to \Hom_{A^e}(P_0, A^e) \to \Hom_{A^e}(P_1, A^e) \to \dots, \\
        \S^{-1} C^{\bullet} &= 0 \to \S^{-1}\Hom_{A^e}(P_0, A^e) \to \S^{-1}\Hom_{A^e}(P_1, A^e) \to \dots.
    \end{align*}
    Since the localization functor taking $\mymod{A^e}$ to $\mymod{\S^{-1}A^e}$ is additive exact, we have
    \begin{align*}
        H^i(\S^{-1}C^\bullet) &\cong \S^{-1} H^i(C^{\bullet}).
    \end{align*}
    Now, it is well-known that because $\S^{-1}A^e$ is flat over $A^e$ and $A$ is finitely presented as an $A^e$-module, then 
    \begin{align*}
        \S^{-1}A^e \otimes_{A^e} \Hom_{A^e}(A,M) \cong \Hom_{\S^{-1}A^e}(\S^{-1}A^e \otimes_{A^e} A, \S^{-1}A^e \otimes_{A^e} M).
    \end{align*}
    Indeed, the same proof given in \cite[Proposition 2.10]{eisenbud} will work for our more general case. With this isomorphism in hand, we observe the following:
    \begin{align*}
        \S^{-1}\Hom_{A^e}(A,N) &\cong \Hom_{\S^{-1}A^e}(\S^{-1}A, \S^{-1}M) \\
        &\cong \Hom_{\S^{-1}A^e}(S^{-1}A, \S^{-1}M).
    \end{align*}
    Notice that the last isomorphism was obtained by recalling that $\S^{-1}A \cong S^{-1}A$ by Lemma \ref{localsmooth}. We conclude that localization preserves Hochschild (co)homology, as desired.
\end{proof}

\begin{lem} \label{homsmoothfp}
Let $A$ be a $k$-algebra which is homologically smooth. Then $A$ is finitely presented as a left $A^e$-module.
\end{lem}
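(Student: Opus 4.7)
The plan is to unpack the definition of homological smoothness directly and truncate. By hypothesis $A$ admits a finite-length resolution
$$0 \to P_n \to P_{n-1} \to \cdots \to P_1 \to P_0 \to A \to 0$$
by finitely generated projective left $A^e$-modules. If $n \geq 1$, truncating after the first two terms already yields an exact sequence $P_1 \to P_0 \to A \to 0$ in which $P_0$ and $P_1$ are finitely generated, which is one standard form of ``finitely presented.'' If $n = 0$, the resolution collapses to an isomorphism $A \cong P_0$ and so $A$ is finitely generated projective, hence trivially finitely presented.

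To match the more familiar formulation in terms of finitely generated free modules, I would upgrade the two-term presentation by absorbing complementary summands. Since $P_0$ is finitely generated projective, pick $Q_0$ with $P_0 \oplus Q_0 \cong F_0$ finitely generated free over $A^e$; then the composition $F_0 \twoheadrightarrow P_0 \twoheadrightarrow A$ is surjective with kernel $Q_0 \oplus \mathrm{im}(P_1 \to P_0)$, and this kernel is finitely generated because $Q_0$ is and because $\mathrm{im}(P_1 \to P_0)$ is a quotient of the finitely generated $P_1$. A second application of the same summand trick to $P_1$ (or just the observation that every finitely generated module over any ring admits a finitely generated free cover) produces an exact sequence $F_1 \to F_0 \to A \to 0$ of the desired form.

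There is no real obstacle here: the statement is essentially a recording of what ``homologically smooth'' already packages, and the only content beyond rewriting definitions is the elementary fact that finitely generated projectives are direct summands of finitely generated frees. I expect this lemma to serve strictly as the hypothesis-checker enabling the application of Lemma \ref{localhh} in the subsequent localization argument, rather than requiring any independent ideas.
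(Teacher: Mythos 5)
Your proof is correct and follows essentially the same route as the paper: both extract the finite presentation from the start of the resolution witnessing homological smoothness, noting that $\ker(d_0)=\operatorname{im}(d_1)$ is finitely generated as the image of the finitely generated $P_1$. The additional upgrade to a free presentation $F_1\to F_0\to A\to 0$ is fine but not needed, since a quotient of a finitely presented module by a finitely generated submodule is already finitely presented.
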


\begin{proof}
Since $A$ is homologically smooth, there is a resolution $0 \to P_n \to \dots \to P_1 \xrightarrow{d_1} P_0 \xrightarrow{d_0} A \to 0$ of finitely generated and projective $A^e$-modules. In particular, we have the short exact sequence $0 \to \ker(d_0) \to P_0 \to A \to 0$. Notice that $P_0$ is finitely presented as a left $A^e$-module, and $\ker(d_0) = \im(d_1)$. Since the homomorphic image of a finitely generated module is finitely generated, we have that $\ker(d_0)$ is finitely generated. We conclude that $A$ is finitely presented as a left $A^e$-module, as desired.
\end{proof}

\begin{lem} \label{bimoduleloc}
Let $A$ be a $k$-algebra with $S \subseteq A$ a left and right denominator set. If $U$ is an invertible $(A,A)$-bimodule, then $S^{-1}U \cong US^{-1}$ as $(A,A)$-bimodules.
\end{lem}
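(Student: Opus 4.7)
The plan is to exhibit a common $(A,A)$-bimodule through which both $S^{-1}U$ and $US^{-1}$ become isomorphic. Setting $Q := S^{-1}A \cong AS^{-1}$ (equal because $S$ is a two-sided denominator set), I would consider the $(Q,Q)$-bimodule $Q \otimes_A U \otimes_A Q$ and show that the natural $(A,A)$-bimodule maps
\[
\alpha \colon S^{-1}U \to Q \otimes_A U \otimes_A Q, \ x \mapsto x \otimes 1, \qquad \beta \colon US^{-1} \to Q \otimes_A U \otimes_A Q, \ x \mapsto 1 \otimes x,
\]
are both isomorphisms, whence $\beta^{-1} \circ \alpha$ supplies the desired iso $S^{-1}U \cong US^{-1}$.

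The crux is to show that the right $A$-action on $S^{-1}U = Q \otimes_A U$ extends canonically to a right $Q$-action, upgrading $S^{-1}U$ to a $(Q,Q)$-bimodule; a symmetric argument equips $US^{-1}$ with a compatible left $Q$-action. Here invertibility of $U$ is essential. Since $U$ is finitely generated projective as a left $A$-module, a standard base-change calculation for endomorphism rings of f.g.\ projectives yields a ring isomorphism $\text{End}_Q(S^{-1}U) \cong Q \otimes_A \text{End}_A(U)$, and invertibility forces the canonical ring map $A^{\op} \to \text{End}_A(U)$ arising from the right $A$-action on $U$ to be an isomorphism. Composing, one obtains $\text{End}_Q(S^{-1}U) \cong Q^{\op}$, and the right $A$-action on $S^{-1}U$ factors through this identification via the natural map $A^{\op} \hookrightarrow Q^{\op}$. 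In particular, every $s \in S$ acts invertibly on the right of $S^{-1}U$, producing the required right $Q$-action.

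With the $(Q,Q)$-bimodule structures in place, the isomorphism $\alpha$ follows from the chain
\[
S^{-1}U \otimes_A Q \cong S^{-1}U \otimes_Q (Q \otimes_A Q) \cong S^{-1}U \otimes_Q Q \cong S^{-1}U,
\]
where $Q \otimes_A Q \cong Q$ under multiplication reflects the familiar fact that Ore localization $A \to Q$ is a flat ring epimorphism; the analogous chain handles $\beta$. The main obstacle is the preceding step: without invertibility of $U$ the ring identification $\text{End}_A(U) \cong A^{\op}$ fails, and there is no reason to expect $S$ to act invertibly on the right of $S^{-1}U$ after localization, which would obstruct the identification of $S^{-1}U$ with the common bimodule $Q \otimes_A U \otimes_A Q$.
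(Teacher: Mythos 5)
Your overall architecture coincides with the paper's: both arguments route $S^{-1}U$ and $US^{-1}$ through the two-sided localization $S^{-1}US^{-1} \cong Q\otimes_A U\otimes_A Q$ and show that each one-sided localization already agrees with it. The difference is in how you justify that the second localization collapses, and that is where there is a genuine gap. The isomorphism $\operatorname{End}_Q(S^{-1}U)\cong Q\otimes_A\operatorname{End}_A(U)$ that you call a ``standard base-change calculation'' is standard only for \emph{central} base change (commutative localization, or $B\otimes_A-$ with $A$ landing in the center of $B$). For a noncommutative Ore localization $A\to Q$ the right-hand side does not even carry a natural ring structure --- $\operatorname{End}_A({}_AU)\cong A^{\op}$ is not a central $A$-algebra --- and there is no natural base-change map. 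Computing honestly via the adjunction $\operatorname{End}_Q(Q\otimes_A U)\cong\Hom_A(U,\,Q\otimes_A U)$ and the finite generation and projectivity of ${}_AU$, one gets $\operatorname{End}_Q(S^{-1}U)\cong\Hom_A(U,A)\otimes_A Q\otimes_A U\cong V\otimes_A Q\otimes_A U$, where $V$ is the inverse bimodule. Identifying this with $Q^{\op}$ requires moving $Q$ past $V$, i.e.\ it requires $V\otimes_A Q\cong Q\otimes_A V$, which is exactly the statement of the lemma applied to $V$. So the crux of your argument --- that every $s\in S$ acts invertibly on the right of $S^{-1}U$ --- is equivalent to the conclusion rather than an input to it.

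The paper sidesteps this by never forming an endomorphism ring: it writes $U\cong\Hom_A(V,A)$, uses that $V$ is finitely presented on each side (being finitely generated projective), and invokes the fact that the flat base changes $S^{-1}A\otimes_A-$ and $-\otimes_A AS^{-1}$ commute with $\Hom$ out of a finitely presented module, concluding with $S^{-1}A\otimes_A AS^{-1}\cong S^{-1}A$. If you wish to keep your endomorphism-ring framing, you must supply an independent proof that the right action of $S$ on $S^{-1}U$ is invertible; the only available route is essentially this $\Hom$-commutation argument, at which point you have reproduced the paper's proof.
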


\begin{proof}
Let $A, S,$ and $U$ as above and let $V$ be such that $U \otimes_A V \cong V \otimes_A U \cong A$, as $(A,A)$-bimodules. Then, as will be argued below,
\begin{align}
S^{-1}US^{-1} \cong S^{-1}U \otimes_A AS^{-1} &\cong \Hom_{S^{-1}A}(S^{-1}V, S^{-1}A) \otimes_A AS^{-1} \label{eq:fp}\\
&\cong \Hom_{S^{-1}A}(S^{-1}V, S^{-1}A \otimes_A AS^{-1}) \label{eq:lam20}\\
&\cong \Hom_{S^{-1}A}(S^{-1}V, S^{-1}A) \label{eq:fp2} \\
&\cong S^{-1}U \label{eq:fp3}.
\end{align}
We briefly argue why each of the above isomorphisms holds:

(\ref{eq:fp},\ref{eq:fp2},\ref{eq:fp3}) 
Since $U$ and $V$ are invertible $(A,A)$-bimodules, $U \cong \Hom_A(V,A)$ and $V \cong \Hom(U,A)$ hold as isomorphisms of $(A,A)$-bimodules. Moving $AS^{-1}$ inside the parentheses is justified by \cite[Proposition 2.10]{eisenbud}, the proof of which holds in the noncommutative setting. Recall that $V$ and $U$ are invertible bimodules, and are thus finitely generated and projective $A$ modules on the left and right. In particular, they are finitely presented as left and right $A$-modules.

(\ref{eq:lam20})Notice that $V$ is finitely generated and projective as a left $A$-modules, so $S^{-1}V$ remains so as a left $S^{-1}A$-module. Now apply \cite[Section 2, Exercise 20]{sacred3}.

An analogous argument can be used to show that $S^{-1}US^{-1} \cong US^{-1}$, allowing us to conclude, indeed, that $S^{-1}U \cong US^{-1}$, as desired.
\end{proof}

At last, we are sufficiently prepared to prove the main result of this section.

\begin{thm} \label{thm:localization}
    Let $A$ be a $k$-algebra, and let $S$ be a left and right denominator subset of $A$. If $A$ is twisted Calabi--Yau of dimension $d$, then $S^{-1}A$ is twisted Calabi--Yau of dimension $d$.
\end{thm}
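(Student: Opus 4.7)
The plan is to combine the preparatory lemmas to simultaneously upgrade homological smoothness and transport the Ext-computation from $A$ to $S^{-1}A$. Since $A$ is twisted Calabi--Yau of dimension $d$, it is in particular homologically smooth, so by Lemma \ref{localsmooth} the localization $S^{-1}A$ is homologically smooth as well. This discharges the first half of the definition; what remains is to identify $\ext^i_{(S^{-1}A)^e}(S^{-1}A,(S^{-1}A)^e)$ and to exhibit an invertible bimodule in degree $d$.

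For the Ext side, I would apply Lemma \ref{localhh}. Lemma \ref{homsmoothfp} ensures that $A$ is finitely presented as a left $A^e$-module, so the hypotheses of Lemma \ref{localhh} are met with $M = A^e$. Writing $\mathcal{S} := S \otimes S^{\op}$, it yields
$$
\mathcal{S}^{-1}\ext^i_{A^e}(A, A^e) \cong \ext^i_{\mathcal{S}^{-1}A^e}(\mathcal{S}^{-1}A, \mathcal{S}^{-1}A^e).
$$
Using the identifications $\mathcal{S}^{-1}A^e \cong (S^{-1}A)^e$ (Lemma \ref{local@tensor}) and $\mathcal{S}^{-1}A \cong S^{-1}A$ (established inside the proof of Lemma \ref{localsmooth}), the right-hand side is precisely $\ext^i_{(S^{-1}A)^e}(S^{-1}A,(S^{-1}A)^e)$. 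On the left, the twisted Calabi--Yau structure on $A$ gives $\ext^i_{A^e}(A,A^e) = 0$ for $i \neq d$ and $\ext^d_{A^e}(A,A^e) \cong U$, so after localizing the only surviving term is $\mathcal{S}^{-1}U$ in degree $d$.

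It remains to check that $\mathcal{S}^{-1}U$ is an invertible $(S^{-1}A, S^{-1}A)$-bimodule. By Lemma \ref{bimoduleloc} we have $\mathcal{S}^{-1}U = S^{-1}US^{-1} \cong S^{-1}U$. If $V$ denotes an inverse bimodule to $U$, then because flat localization commutes with bimodule tensor products,
$$
S^{-1}U \otimes_{S^{-1}A} S^{-1}V \;\cong\; S^{-1}(U \otimes_A V) \;\cong\; S^{-1}A,
$$
and symmetrically in the opposite order. Hence $S^{-1}U$ is invertible with inverse $S^{-1}V$, and $S^{-1}A$ satisfies Definition of twisted Calabi--Yau in dimension $d$.

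The main obstacle is really careful bookkeeping rather than any deep step: verifying that the isomorphism of Lemma \ref{localhh}, combined with the ring isomorphism $\mathcal{S}^{-1}A^e \cong (S^{-1}A)^e$, respects the left $A^e$-module (equivalently, $k$-central bimodule) structure, so that it genuinely identifies the two Ext groups in the correct category. Once this identification is in place, invertibility of $\mathcal{S}^{-1}U$ is the only genuine computation, and it is forced immediately by the compatibility of localization with bimodule tensor products.
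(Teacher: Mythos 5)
Your proposal is correct and follows essentially the same route as the paper's own proof: it uses Lemma \ref{localsmooth} for homological smoothness, Lemma \ref{homsmoothfp} to justify applying Lemma \ref{localhh} with $M = A^e$, the identifications $\mathcal{S}^{-1}A^e \cong (S^{-1}A)^e$ and $\mathcal{S}^{-1}A \cong S^{-1}A$, and finally Lemma \ref{bimoduleloc} together with the compatibility of localization with $\otimes_A$ to show $\mathcal{S}^{-1}U$ is invertible with inverse $S^{-1}V$. No substantive differences from the paper's argument.
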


\begin{proof}
    Since $A$ is homologically smooth, it follows from Lemma \ref{localsmooth} that $S^{-1}A$ is also homologically smooth. Further, by Lemma \ref{homsmoothfp}, homological smoothness gives finite presentation of $A$ as an $A^e$-module.
    Let $U = \ext_{A^e}^{d}(A,A^{e})$. From Lemma \ref{localhh} and the Calabi--Yau property of $A$, it follows that $\ext^{i}_{\S^{-1}A^e}(S^{-1}A, \S^{-1}A^e)$ is zero for $i \neq d$ and isomorphic to $\S^{-1}U$ otherwise. It suffices to show that $\S^{-1}U$ is an invertible $(S^{-1}A)^e$-module. Towards that end, let $V$ be such that $U \otimes_A V \cong V \otimes_A U \cong A$ as $(A,A)$-bimodules. Then, as $(S^{-1}A)^e$-modules,
    \begin{align*}
        \S^{-1}U \otimes_{S^{-1}A} \S^{-1}V &\cong  S^{-1}U \otimes_{S^{-1}A} S^{-1}V 
        \cong S^{-1}(U \otimes_A V)
        \cong S^{-1}A.
    \end{align*}
    Recall that the first isomorphism is Lemma \ref{bimoduleloc} applied to the same argument used in Lemma \ref{localsmooth} to argue that $\S^{-1}A \cong S^{-1}A$ as $(S^{-1}A)^e$-modules.
    
    An analogous argument gives $\S^{-1}V \otimes_{S^{-1}A} \S^{-1}U \cong S^{-1}A$. We conclude that $\S^{-1}A$ is an invertible $(S^{-1}A)^e$-module with inverse given by $\S^{-1}V$. It follows that $S^{-1}A$ is CY of dimension $d$, as desired.
\end{proof}

We are now equipped to discuss examples arising from the ideas considered at the beginning of this section. 

\begin{example}[Localizations of polynomial rings over a field]
    Let $A = k[t_1, \dots, t_n]$. Because $A$ is commutative, any multiplicative subset will provide a left and right denominator subset. As a consequence of Theorem \ref{thm:localization}, we have that $k[t,t^{-1}]$ is Calabi--Yau of dimension 1. Further, since $k[t,t^{-1}]$ is $\Z$-graded with $\deg(t) = 1$ and $\deg(t^{-1}) = -1$, Theorem \ref{thm:gradcyiff} gives that $k[t,t^{-1}]$ is $\Z$-graded Calabi--Yau of dimension 1. Furthermore, by Theorem \ref{thm:localization} the field of fractions $k(t)$ is Calabi--Yau of dimension 1. Observe that $k(t)$ no longer carries an induced grading, since the multiplicative set we localize at contains non-homogeneous elements. Alternatively, we can recall that a field can only have trivial $\Z$-grading. One can construct many similar examples by considering different localizations of the $n$-dimensional Calabi--Yau algebras $k[t_1, \dots, t_n]$.
    \end{example}
    \begin{table}[!h]
        \centering
        \begin{tabular}{*3c}
        \toprule
             $k[t_1, \dots, t_n]$ & $k[t_1,t_1^{-1}, \dots, t_n,t_n^{-1}]$ & $k(t_1, \dots, t_n)$  \\
             Connected $\N$-graded & Strongly $\Z$-graded & Ungraded \\
             \bottomrule
        \end{tabular}
        \bigskip
        
        \caption{Different graded flavors of commutative, $n$-dimensional Calabi--Yau algebras.}
        \label{tab:my_label}
    \end{table} 
Outside the commutative setting, the question of finding suitable sets at which to localize becomes more difficult. Here is one potential strategy. Recall that an element $x$ of a ring $R$ is said to be \textit{normal} if $xR = Rx$. In particular, the set of normal elements of a ring will generate a multiplicatively closed right and left permutable subset. If $S$ is the set of normal elements of a ring $R$, then so long as $S$ is left and right reversible, $S$ will be a left and right denominator subset of $R$. In the case where $R$ is a domain, this comes for free, and we find applications in the following example.  

\begin{example} [Localizations of Artin--Schelter regular algebras]
    By Theorem \ref{thm:localization}, we can expect to find a large number of $\Z$-graded Calabi--Yau algebras arising as localizations of Artin--Schelter regular algebras, provided we have a suitable left and right denominator subset. 

    All known examples of Artin--Schelter regular algebras are domains, and it is conjectured that this is always the case. In light of the above discussion, localizing any (known) Artin--Schelter regular algebra at its set of homogeneous normal elements is an example of a $\Z$-graded Calabi--Yau algebra. Another choice of a left and right denominator set is given by $\{1, f, f^2, \dots\}$ for $f$ a homogeneous normal element. Further, given that the algebra has finite GK-dimension, we could also localize at the set of all nonzero elements, thus giving rise to more examples of Calabi--Yau division algebras.
\end{example}

\printbibliography

\end{document}